\newtheorem{theorem}{Theorem}[section]
\newtheorem{corollary}[theorem]{Corollary}
\newtheorem{remark}[theorem]{Remark}
\newtheorem{example}[theorem]{Example}
\title{\bf A Landau's theorem in several complex variables}
\author{Cinzia Bisi\footnote{Partially supported by GNSAGA of the INdAM, by FIRB ``Geometria Differenziale e teoria geometrica delle funzioni'' and by PRIN ``Variet\'a reali e complesse: geometria, topologia ed analisi armonica''.} 
\footnote{2010 \it{Mathematics Subject Classification}: 32H99, 32A18, 30D45.}\\
\normalsize Universit\`a degli Studi di Ferrara\\
\normalsize Dipartimento di Matematica e Informatica\\
\normalsize Via Machiavelli 35, 44121 Ferrara, Italy \\
\normalsize bsicnz@unife.it}
\begin{document}
\maketitle

\begin{abstract}
In one complex variable it is well known that if we consider the family of all holomorphic functions on the unit disc that fix the origin and with first derivative equal to 1 at the origin, then there exists a constant $\rho$, independent of the functions, such that in the image of the unit disc of any of the functions of the family there is a disc of universal radius $\rho.$ This is the so celebrated Landau's theorem. Many counterexamples to an analogous result in several complex variables exist. In this paper we introduce a class of holomorphic maps for which one can get a Landau's theorem and a Brody-Zalcman theorem in several complex variables.
\end{abstract}

\section{Introduction}
An important tool in geometric function theory in one complex variable is the so called Landau's theorem.\\
It says: {\it if we consider the family $\mathcal{F},$ of holomorphic functions in the unit disc $\Delta$ of $\mathbb{C}$ such that $f(0)=0,$ $f'(0)=1,$ then there is a constant $\rho>0,$ independent of $f,$ such that $f(\Delta)$ contains a disc of radius 
$\rho,$} \cite{Conw73},\cite{Heins62}, \cite{Land29I}, \cite{Land29II}.\\
By the {\it Landau's number} $l(f)$ of $f$ is meant the supremum of the set of positive numbers $r$ such that $f(\Delta)$ contains a disk of radius $r.$ By the {\it Landau's constant} $\rho$ we meant $\inf\limits_{f \in \mathcal{F}} l(f).$
\\
 An easy consequence of such result is the Picard theorem: {\it a non constant entire function $f: \mathbb{C} \to \mathbb{C}$ can omit at most one point. }\\
A related result is the compactness result of Brody-Zalcman: {\it if $\mathcal{H}$ is a non relative compact family of holomorphic maps in the unit disc, then after reparametrization we can get an extracted subsequence of functions in $\mathcal{H}$ converging to a non constant entire map}, \cite{BR78}, \cite{Zalc75}, \cite{Zalc98}. \\
It is well known that such principles break-down in several complex variables in a spectacular way. \\
There are holomorphic maps $F:\mathbb{C}^2 \to \mathbb{C}^2$ with Jacobian equal to $1,$ and whose image omit a non empty open set. These maps play an important role in holomorphic dynamics, see \cite{Sib99}. \\
To be more explicit, let $h(z,w)=(z^2 + aw, z),$ with $|a|<1,$ be a so called h\'enon map of $\mathbb{C}^2:$ it fixes $(0,0)$ and the attraction domain $\Omega$ of $(0,0)$ is a Fatou-Bieberbach domain isomorphic to $\mathbb{C}^2.$ Denote with $\Psi_h: \mathbb{C}^2 \to \Omega$ such isomorphism such that $\Psi ^{'}_h (0,0)=Id.$ Let $\overline{h}$ be the meromorphic extension of $h$ to $\mathbb{P}^2.$ If $(z,w)$ are the coordinates of $\mathbb{C}^2$  and $[z:w:t]$ are the coordinates of $\mathbb{P}^2,$ then the line at infinity has equation $\{ t=0 \}.$ We denote respectively with $I^+$ and $I^-$ the indeterminacy locus of $\overline{h}$ and $\overline{h^{-1}}$; they are two isolated points in $\{ t=0 \}.$  It is well known, see \cite{Sib99}, that the h\'enon map $h$ has $I^{-}=[1:0:0]$ and it is an attracting fixed point at the line at infinity.
Then $\Omega$ omits the basin of attraction of $I^-,$ which is an open set. Hence it cannot be that each $\Psi_{h,n} (z,w):=\frac{1}{n} \Psi_h (nz,nw)$ is such that the image of $B(0,1)$ contains a ball $B(0,r),$ because if this is the case, then $\Psi_h (nB(0,1)) \supset n B(0,r)$ and finally $\Psi_h (\mathbb{C}^2) \supset \mathbb{C}^2,$ which is not true because $\Psi_h$ omits an open set. \\
There are also some elder counterexamples; one by L.A. Harris, published in 1977, \cite{Har77}, that we recall for sake of completeness: given
$\delta >0,$ choose $n$ so that $n\delta^2>2$ and denote $g(z,w)=(z+nw^2,w).$
Suppose that the image of the open polydisc by $g$ contains a ball of radius $\delta$ and let
$(\alpha_0,\beta_0)$ be its center. Then given $|\zeta| < \delta$, there exist points $(z_0,w_0)$ and $(z_1,w_1)$ in the polydisc such that $g(z_0,w_0)=(\alpha_0,\beta_0)$ and $g(z_1,w_1)=(\alpha_0,\beta_0 + \zeta ).$
Hence $w_1 - w_0 = \zeta$, $w_1 + w_0= 2\beta_0 + \zeta$  and $n(w_1^2 - w_0^2)=z_0 - z_1$, so
$n|\zeta| |2\beta_0 + \zeta| \le 2,$ for all $|\zeta| < \delta.$ Thus $n\delta^2 \le  2,$ the desired contradiction. \\
In the same spirit, a second counterexample was given by P. Duren and W. Rudin in 1986, \cite{DuRu86}:
if $\delta >0,$ then the map $f(z,w)=(z, w + (\frac{z}{\delta})^2)$ is in the class of all biholomorphic maps from the unit polydisc into $\mathbb{C}^2$ which fix the origin and whose Jacobian matrix is the identity at the origin, but the image under $f$ of the polydisc contains no closed ball of radius $\delta.$ \\
Indeed, for no $(u,v) \in \mathbb{C}^2$ the image by $f$ of the polydisc $\Delta^2$ contains the circle:
$$
C=\{ (u+\delta e^{i\theta}, v) : -\pi \le \theta \le \pi \}.
$$   
To see this, fix $(u,v)\in \mathbb{C}^2.$ If $(u +\lambda,v) \in f(\Delta^2)$ then, by definition of $f,$ we have that:
$$
|v-\delta^{-2}(u+\lambda)^2|<1.
$$
Therefore, if all points of $C$ were in $f(\Delta^2),$ the inequality:
$$
|(\delta^2v-u^2)-2u\delta e^{i\theta} - \delta^2 e^{2i\theta}| <\delta^2
$$
would hold for all $\theta;$ Parseval's equality shows that this is impossible. \\ 
Even if several weak versions of the Landau's theorem in several complex variables have already been given, see for instance \cite{ChGa01}, \cite{GrKo03}, \cite{GrVr96}, \cite{FiGo94}, \cite{Liu92}, \cite{Tak51}, \cite{Wu67}, 
nevertheless the author believes that this paper can add something to the already existing literature: indeed
the purpose of this note is to introduce a class of holomorphic maps for which one can get a Landau's theorem and a Brody-Zalcman theorem in more than one variable and to underline the connection among the two.\\
Recently the Landau's theorem has also bring the attention of people working over the quaternion variable, see for instance \cite{BS16}, \cite{BS12}, \cite{BG09}, \cite{BG11}, \cite{BS13}.

\section{A Theorem of Brody-Zalcman type}
Let $\Phi: \mathbb{B}^k \to \mathbb{C}^k$ where $\mathbb{B}^k$ is the unit ball of $\mathbb{C}^k,$ $k>1.$ Let $\Phi'(a)$ denote the Jacobian matrix of $\Phi$ computed in the point $a.$
\begin{theorem}[Brody-Zalcman type Theorem] \label{BZT}
Let $C$ be a positive constant.
Consider a family of such holomorphic maps $\Phi$ satisfying
\begin{equation}\label{DiffCond}
||\Phi^{'} (a) ||\cdot ||\Phi^{'} (a)^{-1} || \le C,   \,\,\,\,\, \forall \,\, a \in \mathbb{B}^k .
\end{equation}
If the family is not normal then, after reparametrization, we can extract a subsequence converging to a non degenerate holomorphic map $\Psi : \mathbb{C}^k \to \mathbb{C}^k.$
\end{theorem}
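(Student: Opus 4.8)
The plan is to adapt the classical Zalcman rescaling lemma to this several–variables setting, using condition (\ref{DiffCond}) to keep the rescaled maps from degenerating. First I would make precise what "not normal" means here: by a Marty/Montel–type criterion, non–normality of the family on $\mathbb{B}^k$ should produce a point $p_0 \in \mathbb{B}^k$, a sequence of maps $\Phi_n$ in the family, and a sequence of points $a_n \to p_0$ at which some intrinsic derivative quantity blows up. The natural scalar to track is the operator norm $M_n := \|\Phi_n'(a_n)\|$; I would arrange (by a standard maximum–over–shrinking–balls argument, i.e.\ Zalcman's lemma) that $M_n \to \infty$ while the local rescaling is controlled. Concretely, I would define the reparametrized maps
\begin{equation*}
\Psi_n(z) := M_n^{-1}\bigl(\Phi_n(a_n + r_n z) - \Phi_n(a_n)\bigr),
\end{equation*}
where $r_n \to 0$ is chosen so that $r_n M_n$ stays bounded away from $0$ and $\infty$ on larger and larger balls; the centering and the normalization are exactly what let the limit be defined on all of $\mathbb{C}^k$.

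The key computation is to control the derivatives $\Psi_n'(z) = (r_n M_n^{-1})\,\Phi_n'(a_n + r_n z)$. Here condition (\ref{DiffCond}) is decisive: it says that at every point the distortion $\|\Phi'\|\cdot\|(\Phi')^{-1}\|$ is bounded by $C$, so $\Phi_n'$ is invertible everywhere and its norm cannot oscillate too wildly relative to the norm of its inverse. I would use this to get a two–sided bound $c_1 \le \|\Psi_n'(z)\| \le c_2$ on compact subsets of $\mathbb{C}^k$, for constants depending only on $C$ and the compact set — the upper bound giving equicontinuity (hence, by Montel, a locally uniformly convergent subsequence $\Psi_n \to \Psi$), and the lower bound, transmitted through (\ref{DiffCond}), preventing the limiting Jacobian from collapsing to zero. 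Passing to the limit, $\Psi$ is holomorphic on $\mathbb{C}^k$, and $\Psi'$ inherits the uniform invertibility bound, so $\det \Psi' \not\equiv 0$; that is exactly non–degeneracy.

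The main obstacle, and the place where the several–variables nature bites, is the selection of the correct scaling pair $(a_n, r_n)$ so that the derivative estimate holds uniformly on expanding balls rather than merely at the center. In one variable Zalcman's lemma hands this to us, but in $\mathbb{C}^k$ the quantity $\|\Phi_n'(a)\|$ is the norm of a matrix, not of a scalar, and one must verify that the chosen normalization controls the \emph{whole} Jacobian and not just its action in one distinguished direction. This is where (\ref{DiffCond}) does the essential work: it couples the largest and smallest singular values of $\Phi_n'$, so that a bound on $\|\Phi_n'\|$ automatically yields a comparable bound on every singular value, and the matrix $M_n^{-1}\Phi_n'(a_n + r_n z)$ cannot send the unit ball to a degenerate ellipsoid. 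I would therefore phrase the Zalcman–style maximization in terms of $\|\Phi_n'(\cdot)\|$ directly and then feed (\ref{DiffCond}) in to upgrade the scalar control to genuine matrix control; the rest is the routine diagonal/normal–families argument to extract the convergent subsequence and confirm the limit is defined and non–degenerate on all of $\mathbb{C}^k$.
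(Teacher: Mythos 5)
Your overall strategy is the same as the paper's: rescale at a point maximizing a boundary-weighted derivative norm and use \eqref{DiffCond} to keep the limit from degenerating. The paper makes this precise by setting $\lambda_n:=\sup_{|z|<1}(1-|z|)\|\Phi_n'(z)\|$ (if $\lambda_n$ stays bounded the family is normal, so one may assume $\lambda_n\to\infty$), taking $a_n$ to attain the supremum, and --- this is the one real structural difference from your sketch --- reparametrizing by the \emph{matrix} $B_n:=[\Phi_n'(a_n)]^{-1}$ rather than by a scalar: $\Psi_n(z):=\Phi_n(a_n+B_nz)$. This gives $\Psi_n'(0)=\mathrm{Id}$ exactly, so non-degeneracy of the limit is immediate, and \eqref{DiffCond} enters only once, to bound the displacement $|B_nz|\le C|z|/\|\Phi_n'(a_n)\|\le(1-|a_n|)/2$ for $|z|\le\lambda_n/(2C)$, which yields $\|\Psi_n'(z)\|\le 2C$ on balls of radius $\lambda_n/(2C)\to\infty$. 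Your scalar rescaling can be made to work too (with \eqref{DiffCond} then used at $a_n$ to bound $|\det\Psi_n'(0)|$ from below by $C^{-k}$ times a constant), but it is not what the paper does.

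Two concrete problems with your write-up. First, the normalization is inconsistent: with $\Psi_n(z)=M_n^{-1}(\Phi_n(a_n+r_nz)-\Phi_n(a_n))$ one gets $\|\Psi_n'(0)\|=M_n^{-1}r_n\|\Phi_n'(a_n)\|=r_n\to 0$, so your limit map is constant; you have normalized twice (the prefactor $M_n^{-1}$ and the choice $r_nM_n\asymp 1$ do the same job), and you need to drop one of them. Second, the claimed two-sided bound $c_1\le\|\Psi_n'(z)\|\le c_2$ on compact sets is both unjustified and unnecessary: \eqref{DiffCond} compares the singular values of $\Phi_n'$ \emph{at a single point} but gives no lower bound on $\|\Phi_n'(a_n+r_nz)\|$ relative to $\|\Phi_n'(a_n)\|$ for $z\ne 0$, so a uniform lower bound away from the center does not follow; what you actually need (and what suffices for non-degeneracy, i.e.\ $\det\Psi'\not\equiv 0$) is only the bound at $z=0$, where the correctly normalized $\Psi_n'(0)$ is a scalar multiple of $\Phi_n'(a_n)$ of unit norm and hence has all singular values in $[C^{-1},1]$. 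Finally, the selection of $(a_n,r_n)$ --- which you rightly identify as the crux --- is left as ``a standard maximum-over-shrinking-balls argument''; the paper's explicit choice via $\lambda_n$ is exactly the missing step, and without it the claim that the upper derivative bound holds on expanding balls is not established.
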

\begin{proof}
Let $\Phi_n$ be an arbitrary sequence of maps of the family.
We can assume that the maps $\Phi_n$ are defined in a neighborhood of $\overline{\mathbb{B}^k}.$ \\
Define 
$$\lambda_n := \sup\limits_{|z|<1} (1-|z|) ||\Phi_n^{'} (z)||.$$
We can also assume that $\lambda_n \to + \infty,$ because if not the family is normal, \cite{Schiff93}.
Let $a_n$ be such that $(1-|a_n|)(||\Phi^{'}_n (a_n)||)=\lambda_n.$ \\
Define $$B_n := [(\Phi_n^{'})(a_n)]^{-1}$$ and $$\Psi_n(z) := \Phi_n (a_n + B_n z).$$ \\
We are going to show that $\Psi '_n (z)$ is well defined in $|z| \le \frac{\lambda_n}{2C}.$ \\
Indeed $\Psi_n^{'} (z) =\Phi_n^{'} (a_n + B_n z) \circ B_n$ with $\Psi_n^{'} (0)=Id$ and since
$$(1-|a_n + B_n z|)||\Phi_n^{'} (a_n + B_n z)|| \le \lambda_n$$  \\
we have:
\begin{equation} \label{123}
||\Psi_n^{'} (z)|| \le ||\Phi_n^{'} (a_n + B_n z)|| \cdot ||B_n|| \le \frac{\lambda_n ||B_n||}{1-|a_n|-|B_nz|}.
\end{equation}
If $|z| \le \frac{\lambda_n}{2C},$ then by \eqref{DiffCond}:
\begin{equation}\label{456}
 |B_n z| \le \frac{C|z|}{||\Phi^{'}(a_n)||} \le \frac{(1-|a_n|)}{\lambda_n} C \frac{\lambda_n}{2C}= \frac{1-|a_n|}{2}.
\end{equation}
So 
$$
||\Psi_n^{'} (z)|| \le \frac{2 \lambda_n}{1-|a_n|} ||B_n|| \le \frac{2\lambda_n}{1-|a_n|} \frac{C}{||\Phi_n^{'} (a_n)||}\le 2C
$$
So the family $\Psi_n$ is locally normal in $|z| \le \frac{\lambda_n}{C} \to \infty.$ \\
We get that $\Psi_n$ tends to a holomorphic map $\mathbb{C}^k \to \mathbb{C}^k.$ Moreover $\Psi_n^{'} (0)=Id$ so $\Psi^{'} (0)= Id.$
Hence $\Psi$ is non degenerate.
\end{proof}
\begin{remark}
If $k=1,$ then \eqref{DiffCond} is automatically satisfied.
\end{remark}
\begin{remark} Condition \eqref{DiffCond} implies that all the eigenvalues are comparable, i.e. $|\lambda_{\textrm{max}}(\Phi^{'} (a))| \le C |\lambda_{\textrm{min}} (\Phi^{'} (a))|$ for all $a \in \mathbb{B}^k,$ where $|\lambda_{\textrm{min}} (\Phi^{'} (a))|$ and $|\lambda_{\textrm{max}} (\Phi^{'} (a))|$ are respectively the minimal and the maximal modulus of the eigenvalues of the Jacobian matrix $\Phi^{'} (a).$
\end{remark}
\begin{remark}
Furthermore it is enough to assume \eqref{DiffCond} out of an analytic set, so the maps $\Phi$ don't need to be locally invertible. Indeed if we suppose that \eqref{DiffCond} holds out of an analytic set $A_{\Phi},$ we can choose $a_n \notin A_{\Phi_n}.$ 
\end{remark}
\begin{remark}
The condition \eqref{DiffCond} can be refined in 
$$
\sup\limits_{|z| \le \frac{1-|a|}{2}} ||\Phi^{'} (a+z) \cdot \Phi^{'} (a)^{-1}|| \le C
$$
This condition is less strong of \eqref{DiffCond} and implies that $||\Psi_n^{'}||$ is bounded for $|z| \le d_n$ with $d_n \to + \infty.$
\end{remark}
\begin{remark}
The Brody-Zalcman renormalization theorem \ref{BZT}, works also for families of maps from $\mathbb{B}^k$ to a compact hermitian manifold $M$ of dimension $k,$ \cite{FS00}.
\end{remark}
We also point out that, in \cite{Min82}, several sufficient conditions for a family of quasiregular mappings to be normal were previously given. 

\section{Landau's Theorem}
Assume that $\Phi \colon \mathbb{B}^k \to \mathbb{C}^k,$ $\Phi (0)=0,$ and $\Phi^{'} (0)= Id.$ \\
\begin{theorem}[Landau's Theorem]\label{MainTheo}
Let $C$ be a positive constant. Assume
\begin{equation}\label{DiffCond2}
||\Phi ^{'} (z)|| \cdot ||\Phi^{'} (z)^{-1}|| \le C, \,\, \forall \,\, z \in \mathbb{B}^k, 
\end{equation}
then there exists $\rho >0,$ depending only on $C,$ such that $\Phi(\mathbb{B}^k)$ contains a ball of radius $\rho >0.$
There exists also a domain $U$ such that $\Phi(U)=B(a, \rho).$
\end{theorem}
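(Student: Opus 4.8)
The plan is to argue by contradiction, exploiting the rescaling already built into Theorem \ref{BZT}. Let $\mathcal{F}_C$ be the family of all maps satisfying $\Phi(0)=0$, $\Phi'(0)=\mathrm{Id}$ and \eqref{DiffCond2}, and for $\Phi\in\mathcal{F}_C$ let $l(\Phi)$ denote its Landau number, i.e. the supremum of the radii $r$ such that $\Phi(\mathbb{B}^k)$ contains a ball $B(a,r)$; set $\rho=\inf_{\Phi\in\mathcal{F}_C} l(\Phi)$. Suppose for contradiction that $\rho=0$ and choose $\Phi_n\in\mathcal{F}_C$ with $l(\Phi_n)\to 0$.

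First I would split according to whether $\{\Phi_n\}$ is normal. If it is not normal, Theorem \ref{BZT} applies verbatim, since \eqref{DiffCond} coincides with \eqref{DiffCond2}: after the affine reparametrization $\Psi_n(z)=\Phi_n(a_n+B_nz)$ one extracts a subsequence converging locally uniformly on all of $\mathbb{C}^k$ to a non-degenerate $\Psi$ with $\Psi'(0)=\mathrm{Id}$, and by construction $\|\Psi_n'\|\le 2C$ on each fixed ball for $n$ large. If instead $\{\Phi_n\}$ is normal, then since $\Phi_n(0)=0$ there is no compact divergence, and I extract a subsequence $\Phi_n\to\Phi$ locally uniformly on $\mathbb{B}^k$; here I set $\Psi=\Phi$, $\Psi_n=\Phi_n$ and take the affine map to be the identity, so that the two branches merge. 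In both situations the limit $\Psi$ is holomorphic on its domain, satisfies $\Psi'(0)=\mathrm{Id}$, hence $\det\Psi'(0)=1\neq 0$.

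Next I would produce a \emph{universal} schlicht ball for the limit. Since $\det\Psi'(0)=1$, the inverse function theorem furnishes a ball $B(0,\varepsilon)$ on which $\Psi$ is biholomorphic onto an open set containing a ball $B(\Psi(0),2\rho_0)$ with $\rho_0>0$. The crucial point is to transfer this ball back to the $\Psi_n$, and from them to the original $\Phi_n$. Because $\Psi_n\to\Psi$ locally uniformly, the Jacobians converge, so $\det\Psi_n'\to\det\Psi'$ locally uniformly and $\det\Psi_n'$ stays bounded away from zero on $B(0,\varepsilon/2)$ for $n$ large; together with the local uniform boundedness of $\Psi_n'$ (equal to $2C$ in the rescaled branch, and automatic from convergence in the normal branch) this yields, via the quantitative inverse function theorem with constants independent of $n$, a fixed injectivity radius and a fixed $\rho_0'>0$ such that $\Psi_n$ is univalent on a subdomain $V_n\subset B(0,\varepsilon/2)$ with $\Psi_n(V_n)=B(a,\rho_0')$.

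Finally I would pull this back through the affine change of variables. Since $z\mapsto a_n+B_nz$ is an injection of $V_n$ into $\mathbb{B}^k$, the map $\Phi_n$ is univalent on $U_n:=a_n+B_nV_n\subset\mathbb{B}^k$ and $\Phi_n(U_n)=B(a,\rho_0')$; in particular $l(\Phi_n)\ge\rho_0'>0$, contradicting $l(\Phi_n)\to 0$. This establishes the existence of $\rho>0$ depending only on $C$, and the domain $U$ asserted in the statement is exactly such a preimage $U_n$. I expect the main obstacle to be the transfer step: one must prove a several-variable Hurwitz-type stability, asserting that local univalence and the covering of a fixed ball persist under locally uniform convergence with all constants uniform in $n$. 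It is precisely here that \eqref{DiffCond2}, through the bound $\|\Psi_n'\|\le 2C$ of Theorem \ref{BZT} and the normalization $\det\Psi_n'(0)=1$, keeps those constants under control.
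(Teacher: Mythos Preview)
Your proposal is correct and follows the same architecture as the paper's proof: argue by contradiction, split into the normal and non-normal cases, invoke Theorem~\ref{BZT} in the non-normal case to produce the rescaled sequence $\Psi_n\to\Psi$ with $\Psi'(0)=\mathrm{Id}$, and then transfer a schlicht ball from $\Psi$ back to $\Psi_n$ and hence to $\Phi_n$.

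The only substantive difference lies in the transfer step. The paper carries it out via the several-variable Rouch\'e theorem: having $|\Psi_n-\Psi|<|\Psi-b|$ on a suitable sphere $\partial B(0,r')$ forces $\Psi_n-b$ to have a zero inside for every $b$ in a fixed ball, so $\Psi_n(B(0,r'))$ covers that ball. You instead propose a quantitative inverse function theorem, using the uniform bounds $\|\Psi_n'\|\le 2C$ and $\det\Psi_n'(0)=1$ to obtain a uniform injectivity radius and covered ball. Both arguments are valid and closely related---Rouch\'e is precisely the Hurwitz-type stability you flag as the main obstacle---but the Rouch\'e route is shorter and gives the covering directly, while your route makes the univalence on the domain $U$ more explicit. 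Either way the domain $U$ with $\Phi(U)=B(a,\rho)$ falls out from the affine pullback $U_n=a_n+B_nV_n$, exactly as you describe.
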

\begin{proof}
Suppose, by contradiction, that this is not the case. Then there exists a sequence $\Phi_n$ such that $\Phi_n (0)=0,$ $\Phi_n ^{'} (0)=Id$
satisfying \eqref{DiffCond2} and not the conclusion of the theorem. \\
If $(1-|z|)||\Phi_n^{'} (z)|| \le A,$ with $A$ constant, then $\Phi_n$ is normal and we can assume $\Phi_n \to \Psi,$ with $\Psi (0)=0$ and $\Psi^{'} (0)=Id.$ \\
On an appropriate sphere $\partial B (0,r),$ we get $|\Phi_n -\Psi | < |\Psi|.$ 
So by Rouche's theorem and by the fact that the image of $\Psi$ contains a ball of radius $R$, we get that eventually also the images of $\Phi_n$ contain a ball of radius smaller  or equal than $R$ because we are interested in the inequality 
$$|\Phi_n - b - \Psi +b| < |\Psi - b| \le |\Psi| + |b|,
$$ for all $b \in B(a, R)$  and this will be satisfied possibly on a $\partial B (0, r^{'})$ with $r^{'} \le r$ which is a contraddiction.\\
Hence we can assume that $\sup\limits_{|z| <1} (1-|z|) ||\Phi_n^{'} (z)|| =\lambda_n \to + \infty.$ \\
As in the previous result we can reparametrize and get
$$
\Psi_n := \Phi_n (a_n + A_n z) \to \Psi , \,\,\,\,   \Psi^{'} (0)=Id.
$$   
Then $\Psi(\mathbb{B}^k)$ contains a ball centered at $\Psi (0)=a,$ $B(\Psi(0), R).$ \\
Since by Rouche's theorem: 
$$
|\Psi_n - \Psi| < |\Psi|
$$
on $\partial B(0,r),$ then eventually the image of $\Psi_n$ contains a ball of radius smaller  or equal than $R$ because we are interested in the inequality 
$$|\Psi_n - b - \Psi +b| < |\Psi - b| \le |\Psi| + |b|,
$$ for all $b \in B(a, R)$  and this will be satisfied possibly on a $\partial B (0, r^{'})$ with $r^{'} \le r.$ \\
Then the image of $\Phi_n$ will contain eventually a ball of radius $R^{'} \le R$ and we get the domain $U.$  \\
The supremum of the set of positive numbers $R^{'}$ such that $\Phi(\mathbb{B}^k)$ contains a ball of radius $R^{'}$ is the so called {\it Landau's number} $l(\Phi).$ The constant $\rho$ is $\inf\limits_{\Phi \in \mathcal{G}} l(\Phi)$ where
$\mathcal{G}$ is the set of $\Phi \colon \mathbb{B}^k \to \mathbb{C}^k,$ $\Phi (0)=0,$ and $\Phi^{'} (0)= Id$ satisfying condition \eqref{DiffCond2}.
\end{proof}
We point out that, following the same arguments of \cite{Hahn73}, it is possible to find an open set $U$ on which the entire family of $\Phi$'s satisfying \eqref{DiffCond2} is injective, and it is also possible to relate the constant $C$ with the uniform radius $\rho.$
\begin{example}
Consider the following family of h\'enon maps of $\mathbb{C}^2:$
$$
h_b(z,w)=(z^2+bw, z)
$$
with $\delta < |b| \le  \eta,$ for $\delta, \,\, \eta$ fixed constants. \\
Let $f(z,w)=(e^{cz}-1,e^{cw}-1),$ with $|c|$ small enough if you need $f$ invertible, then
the family $$g_{b,c} (z,w) = \{ h_b \circ (e^{cz}-1, e^{cw}-1) \}$$ satisfies the hypothesis of theorem \ref{MainTheo}, up to dilation. \\
Furthermore, if the ball centered in the origin is enough small, then the ball of universal radius contained in the images of the family is a ball on which the maps of the family are invertible.
\end{example}
\begin{corollary}
Let $f\colon \mathbb{C}^k \to \mathbb{C}^k$ be an entire map. Suppose that $f^{'} (0) =Id$ and suppose that there exists a constant $C$ such that $||f^{'} (z)|| \cdot ||f^{'} (z)^{-1}|| \le C,$ then $f(\mathbb{C}^k)$ contains balls of arbitrary large radius.  
\end{corollary}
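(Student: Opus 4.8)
The plan is to deduce the corollary from the Landau theorem (Theorem \ref{MainTheo}) on the unit ball by a simple dilation argument, the whole point being that the radius $\rho$ produced there depends only on the distortion constant $C$ and not on the individual map.

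First I would reduce to the normalized situation. Replacing $f$ by $f - f(0)$ changes neither the Jacobian $f'$ nor the property that the image contains a ball of a prescribed radius (it merely translates $f(\mathbb{C}^k)$), so there is no loss of generality in assuming $f(0)=0$. Thus $f$ satisfies $f(0)=0$, $f'(0)=\mathrm{Id}$, and $\|f'(z)\|\cdot\|f'(z)^{-1}\|\le C$ for all $z\in\mathbb{C}^k$.

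Next, for each $R>0$ I would introduce the rescaled map
$$
f_R(z):=\frac{1}{R}\,f(Rz),\qquad z\in\mathbb{B}^k.
$$
A direct computation gives $f_R(0)=0$ and $f_R'(z)=f'(Rz)$, whence $f_R'(0)=f'(0)=\mathrm{Id}$. Moreover, for $z\in\mathbb{B}^k$ the point $Rz$ still lies in $\mathbb{C}^k$, so
$$
\|f_R'(z)\|\cdot\|f_R'(z)^{-1}\|=\|f'(Rz)\|\cdot\|f'(Rz)^{-1}\|\le C.
$$
Hence $f_R$ belongs to the class $\mathcal{G}$ and satisfies the hypotheses of Theorem \ref{MainTheo} with the \emph{same} constant $C$, independently of $R$.

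Applying Theorem \ref{MainTheo}, I conclude that $f_R(\mathbb{B}^k)$ contains a ball of radius $\rho>0$, where $\rho$ depends only on $C$. The decisive point — and essentially the only thing to watch — is precisely this uniformity: since $\rho$ is independent of $R$, undoing the dilation via $f_R(\mathbb{B}^k)=\frac{1}{R}f(B(0,R))$ shows that $f(B(0,R))$ contains a ball of radius $R\rho$. Letting $R\to+\infty$ forces $R\rho\to+\infty$, so $f(\mathbb{C}^k)$ contains balls of arbitrarily large radius, as claimed. I do not expect any genuine obstacle here beyond verifying that the invariance of the distortion quantity under the scaling $z\mapsto Rz$ lets the same $\rho$ serve for every $R$; everything else is routine bookkeeping.
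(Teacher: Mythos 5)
Your proposal is correct and follows exactly the paper's argument: the paper's one-line proof is precisely the observation that $\frac{1}{R}f(R\cdot B(0,1))\supset B(a,\rho)$ for every $R>0$ by Theorem \ref{MainTheo}, which is the dilation you carry out in detail. Your additional remarks (normalizing $f(0)=0$ and noting the scale-invariance of the distortion bound, hence the uniformity of $\rho$ in $R$) merely make explicit what the paper leaves implicit.
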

\begin{proof}
Indeed $\frac{1}{R}f(R \cdot B(0,1))\supset B(a,r)$ for any $R>0,$ by theorem \ref{MainTheo}.
\end{proof}

\section*{Acknowledgement}
The author warmly thanks the Referee for his/her accurate remarks.


\begin{thebibliography}{999999}

\bibitem[BG09]{BG09}
C. Bisi, G. Gentili: M\"{o}bius transformations and the Poincar\'e distance in the quaternionic setting, \emph{Indiana Univ. Math. J.} $\boldsymbol{58}$ (2009), no. 6, 2729-2764.

\bibitem[BG11]{BG11}
C. Bisi, G. Gentili: On the geometry of the quaternionic unit disc, \emph{Hypercomplex analysis and applications}, 1-11, Trends Math., Birkh\"auser/Springer Basel AG, Basel, (2011). 

\bibitem[BS16]{BS16}
C. Bisi, C. Stoppato: Landau's theorem for slice regular functions on the quaternionic unit ball, \emph{Preprint} (2016).

\bibitem[BS12]{BS12}
C. Bisi, C. Stoppato: The Schwarz-Pick lemma for slice regular functions, \emph{Indiana Univ. Math. J.} $\boldsymbol{61}$ (2012), no. 1, 297-317. 

\bibitem[BS13]{BS13}
C. Bisi, C. Stoppato: Regular vs. classical M\"obius transformations of the quaternionic unit ball, \emph{Advances in hypercomplex analysis} 1-13, Springer INdAM Ser., 1, Springer, Milan, (2013).

\bibitem[BR78]{BR78}
R. Brody: Compact manifolds and hyperbolicity, \emph{Trans. Amer. Math. Soc.} $\boldsymbol{235}$ (1978), 213-219. 

\bibitem[ChGa01]{ChGa01}
H. Chen, P.M. Gauthier: Bloch constants in several variables, 
\emph{Trans. Amer. Math. Soc.} $\boldsymbol{353}$ (2001), no. 4, 1371-1386. 

\bibitem[Conw73]{Conw73}
J.B. Conway: \emph{Functions of one complex variable}. Graduate Texts in Mathematics, 11. Springer-Verlag, New York-Heidelberg, 1973. 



\bibitem[DuRu86]{DuRu86} P. Duren, W. Rudin: Distorsion in several variables, \emph{Complex Variables: Theory and Applications},
$\boldsymbol{5 \,\,(2-4)}$ (1986), 323-326.

\bibitem[FiGo94]{FiGo94} 
C. H. FitzGerald, S. Gong: The Bloch theorem in several complex variables,
\emph{J. Geom. Anal.} $\boldsymbol{4}$ (1994), no. 1, 35-58.

\bibitem[FS00]{FS00}
J.E. Fornaess, E.L. Stout: Regular holomorphic images of balls, \emph{Ann. Inst. Fourier}, $\boldsymbol{32\,\,(2)}$ (1982),  23-36. 


\bibitem[GrKo03]{GrKo03}
I. Graham, G. Kohr: \emph{Geometric function theory in one and higher dimensions}. Pure and Applied Mathematics, n. 255, Dekker, New York, 2003. 

\bibitem[GrVr96]{GrVr96}
I. Graham, D. Varolin: Bloch constants in one and several variables,
\emph{Pacific J. Math.} $\boldsymbol{174}$ (1996), no. 2, 347-357. 

\bibitem[Hahn73]{Hahn73}
K.T. Hahn: Higher dimensional generalizations of the Bloch constant and their lower bounds, \emph{Trans. of the Amer. Math. Soc.}, $\boldsymbol{179}$, (1973), 263-274.

\bibitem[Har77]{Har77}
L.A. Harris: On the size of balls covered by analytic transformations, \emph{Monatshefte f\"ur Mathematik}, $\boldsymbol{83}$
(1977), 9-23.

\bibitem[Heins62]{Heins62}
M. Heins: \emph{Selected topics in the classical theory of functions of a complex variable.}
 Athena Series: Selected Topics in Mathematics Holt, Rinehart and Winston, New York 1962, 160 pp. 

\bibitem[Land29I]{Land29I}
E. Landau: \"Uber die Blochsche Konstante und zwei verwandte Weltkonstanten, \emph{Math. Z.}, $\boldsymbol{30 \,\,(1)}$, (1929), 608-634.

\bibitem[Land29II]{Land29II}
E. Landau: \emph{Darstellung und Begr\"undung einiger neuerer Ergebnisse der Funktionentheorie},
Springer Verlag, Berlin (1929).

\bibitem[Liu92]{Liu92}
X.Y. Liu: Bloch functions of several complex variables,
\emph{Pacific J. Math.} $\boldsymbol{152}$ (1992), no. 2, 347-363. 

\bibitem[Min82]{Min82}
R. Miniowitz: Normal families of quasimeromorphic mappings, \emph{Proceedings of the American Mathematical Society}, $\boldsymbol{84 \,\, (1)}$, (1982), 35-43.

\bibitem[Schiff93]{Schiff93}
J. L. Schiff: \emph{Normal families}. 
Universitext. Springer-Verlag, New York, 1993. 

\bibitem[Sib99]{Sib99}
N. Sibony: \emph{Dynamique des applications rationnelles de $\mathbb{P}^k$}. (French) [\emph{Dynamics of rational maps of $\mathbb{P}^k$}] Dynamique et g\'eom\'etrie complexes (Lyon, 1997), Panor. Synth\'eses,$\boldsymbol{8}$, Soc. Math. France, Paris, (1999), 97-185. 























\bibitem[Tak51]{Tak51}
S. Takahashi: Univalent mappings in several complex variables,
\emph{Ann. of Math.} $\boldsymbol{53 \,\, (2)}$, (1951). 464-471. 

\bibitem[Wu67]{Wu67}
H. Wu: Normal families of holomorphic mappings,
\emph{Acta Math.} $\boldsymbol{119}$ (1967), 193-233. 

\bibitem[Zalc75]{Zalc75}
L. Zalcman: A Heuristic Principle in Complex Function, \emph{The American Mathematical Monthly}, $\boldsymbol{82 \,\,(8)}$ (1975), 813-818.

\bibitem[Zalc98]{Zalc98}
L. Zalcman: Normal Families: New Perspectives, \emph{Bulletin (New Series) of the American Mathematical Society}
$\boldsymbol{35\,\,(3)}$, (1998), 215-230.

\end{thebibliography}
\end{document}